\documentclass[11pt]{amsart}

\usepackage{geometry, amsmath, amssymb, amsthm, mathrsfs, setspace, comment, amscd, latexsym, mathtools, bm}
\usepackage[all]{xy}                          
\geometry{a4paper} 

\usepackage{parskip}
\parindent=16pt

%%%% show the ref names %%%%%%%%%%%%%%%%%%
%\usepackage{showkeys} 

%%%%%%%%% side margin %%%%%%%%%%%%%%%%

%%%%%%%%%%%%% overpic %%%%%%%%%%%%%%%%%%%%%%
\usepackage[abs]{overpic} 
\usepackage{pict2e}

%%%%%%%%%%%%% color $$$$$$$$$$$$$$$$$$$$$
\usepackage{xcolor,graphicx}

%%%%%%%%%%%%% hyperreff $$$$$$$$$$$$$$$$$$$$$
\usepackage{hyperref}
\hypersetup{
    colorlinks=true,
    citecolor=blue,
    linkcolor=blue,
    urlcolor=blue,
}

%%%%%%%%%%%%%%%%%%
\makeatletter
\@addtoreset{equation}{section}

\makeatother

\newtheorem{theorem}{Theorem}[section]
\newtheorem{lemma}[theorem]{Lemma}

\newtheorem{proposition}[theorem]{Proposition}

\theoremstyle{definition}
\newtheorem{definition}[theorem]{Definition}
\newtheorem{example}[theorem]{Example}
\newtheorem{question}[theorem]{Question}

\newtheorem{remark}[theorem]{Remark}

%%%%% newcommand %%%%%%%%%%%%%%%%%
\newcommand{\del}{\partial}
\newcommand{\N}{\mathbb{N}}
\newcommand{\Z}{\mathbb{Z}}
\newcommand{\R}{\mathbb{R}}

\newcommand{\C}{\mathbb{C}}
\newcommand{\CP}{\mathbb{CP}}

%%%%% renewcommand %%%%%%%%%%%%%%%%%

%%%%% DeclareMathOperator %%%%%%%%%%%%%%%%%

\DeclareMathOperator{\FS}{FS}

\DeclareMathOperator{\im}{\mathrm{Im}\, }
\DeclareMathOperator{\PD}{PD}

\DeclareMathOperator{\orb}{orb}

\title[Stein fillability of $S^1$-bundles over symplectic manifolds]{A note on Stein fillability of circle bundles over symplectic manifolds}
%    Information for first author
\author[Takahiro Oba]{Takahiro Oba}
\address{Department of Mathematics, Osaka University, Toyonaka, Osaka 560-0043, Japan}
\email{taka.oba@math.sci.osaka-u.ac.jp}
%%    \thanks will become a 1st page footnote.
%\subjclass[2020]{Primary ; Secondary }
%\thanks{This work was partially supported by JSPS KAKENHI Grant Number }
\date{\today}

\begin{document}

\maketitle

\begin{abstract}
We show that, given a closed integral symplectic manifold $(\Sigma, \omega)$ of dimension $2n \geq 4$,  for every integer $k>\int_{\Sigma}\omega^{n}$, the Boothby--Wang bundle over $(\Sigma, k\omega)$ carries no Stein fillable contact structure. 
This negatively answers a question raised by Eliashberg. 
A similar result holds for Boothby--Wang orbibundles. 
As an application, we prove the non-smoothability of some isolated singularities. 
\end{abstract}

%\tableofcontents

\section{Introduction}\label{section: intro}

The study of contact topology has revealed both flexible and rigid aspects of contact structures. 
According to Eliashberg \cite{El_OT} and Borman--Eliashberg--Murphy \cite{BEM}, the existence and classification of overtwisted contact structures essentially reduce to algebraic topology; 
as a result, every almost contact structure is realized as an overtwisted contact structure. 
In contrast, there exist almost contact structures that cannot be represented by tight contact structures (e.g. \cite{EH} and \cite{LS}). 
This implies that tight contact structures are more rigid. 
Our primary interest lies in \textit{Stein fillable} contact structures, which form a special class of tight contact structures (see Section \ref{section: Stein} for the definition). 

Circle bundles over symplectic manifolds provide important examples of contact manifolds. 
Let $(\Sigma, \omega)$ be a closed integral symplectic manifold of dimension $2n$, i.e., $[\omega] \in H^{2}(\Sigma; \R)$ has an integral lift. 
(Throughout the paper, we always assume $\Sigma$ to be oriented so that $\omega^{n}$ is a positively oriented volume form and an integral lift of $[\omega]$ to be fixed.) 
The \textit{Boothby--Wang bundle} over $(\Sigma, \omega)$ is a principal circle bundle $p \colon M \rightarrow \Sigma$ with Euler class $e(M)=-[\omega]$, which is uniquely determined up to isomorphism. 
A connection $1$-form on $M$ with curvature form $2\pi \omega$ defines a contact structure $\xi$ on $M$. 
We call $(M, \xi)$ the \textit{Boothby--Wang contact manifold} associated with $(\Sigma, \omega)$.  
Boothby--Wang bundles naturally appear in the study of contact and symplectic topology. 
For example, they are related to symplectic submanifolds constructed by Donaldson \cite{Don}, as explained in \cite{DL} and \cite{Giroux_Remarks} (see also Remark \ref{rmk: verification}). 

From the fillability viewpoint, every Boothby--Wang contact manifold $(M,\xi)$ is strongly fillable since the disk bundle associated with $p \colon M \rightarrow  \Sigma$ serves as a strong symplectic filling of $(M,\xi)$. 
This leads to the following question, as posed in \cite[Section 4.1.2]{Biran_ECM} and \cite[Problem 6.6]{BCS}: 
Is the contact manifold $(M,\xi)$ Stein fillable?
More generally, one may ask whether the Boothby--Wang bundle $M$ admits a Stein fillable contact structure or not. 
There are examples of Boothby--Wang bundles admitting no Stein fillable contact structures (e.g. \cite{EKP} and \cite{PP}); see also \cite{KO} for non-Stein fillable Boothby--Wang contact manifolds. 
Furthermore, Bowden--Crowley--Stipsicz \cite{BCS} provided a characterization of an almost contact structure that is represented by a Stein fillable contact structure. 
However, applying this characterization to a specific case is yet challenging in practice, as it is expressed in terms of cobordism theory.

The main theorem in this paper, stated below, gives a new class of Boothby--Wang bundles without Stein fillable contact structures. 
In particular, this negatively answers the question raised by Eliashberg \cite[p.~4]{CourteAIM}. 

\begin{theorem}\label{thm: main}
Let $(\Sigma, \omega)$ be a closed integral symplectic manifold of dimension $2n \geq 4$. 
Then, for every integer $k> \int_\Sigma \omega^{n}$, the Boothby--Wang bundle $M_k$ over the symplectic manifold $(\Sigma, k\omega)$ carries no Stein fillable contact structure. 
\end{theorem}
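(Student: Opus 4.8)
The plan is to argue by contradiction. Suppose some contact structure $\eta$ on $M_k$ admits a Stein filling $W$. Since a Stein domain is an exact (Weinstein) symplectic manifold, $W$ carries a Liouville form $\lambda$ with $\Omega_W = d\lambda$ and $\lambda|_{M_k}$ a contact form for $\eta$. I will first treat the standard case $\eta = \xi$ and cap $W$ off symplectically. By construction $(M_k,\xi)$ is the concave boundary of the disk bundle $C$ of the line bundle over $\Sigma$ with Euler class $+k[\omega]$, whose zero section is a symplectic copy of $\Sigma$ with normal bundle of Chern class $k[\omega]$. Gluing $W$ to $C$ along $M_k$ yields a closed symplectic manifold $(X,\Omega)$ of dimension $2n+2$ in which $\Sigma$ is a codimension-two symplectic submanifold and $X\setminus\Sigma$ is symplectomorphic to the completion of the exact filling $W$; in particular the complement $X\setminus\Sigma$ is exact.

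The exactness of $X\setminus\Sigma$ is the structural engine. Because $\Omega$ is exact away from $\Sigma$, its class $[\Omega]$ lies in the image of $H^2(X,X\setminus\Sigma)\to H^2(X)$, a rank-one group generated by $\PD[\Sigma]$; comparing restrictions to $\Sigma$, where $[\Omega]|_\Sigma = k[\omega] = e(\nu_\Sigma) = \PD[\Sigma]|_\Sigma$, forces $[\Omega]=\PD[\Sigma]$. Hence
\[
\int_X \Omega^{n+1} \;=\; \int_\Sigma \big([\Omega]|_\Sigma\big)^n \;=\; \int_\Sigma (k[\omega])^n \;=\; k^n \int_\Sigma \omega^n .
\]
Exactness also means no nonconstant closed $J$-holomorphic curve can live in $X\setminus\Sigma$ (positive $\Omega$-area cannot be exact), so every such sphere meets $\Sigma$; the fiber (``line'') class $F$ of the cap, capped off inside $W$, satisfies $[\Omega]\cdot F = \PD[\Sigma]\cdot F = [\Sigma]\cdot F = 1$. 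Geometrically this says $X$ is the projective completion/cone over $(\Sigma,k[\omega])$, ruled by area-one rational curves emanating from the skeleton of $W$ and meeting $\Sigma$ once, and $W$ is a smoothing of the cone singularity.

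The crux, and the step I expect to be the main obstacle, is to convert this picture into the \emph{strict} arithmetic bound $k \le \int_\Sigma\omega^n$. I anticipate this running through a pseudoholomorphic-curve count: the exact complement forces $X$ to be uniruled by the curves in class $F$, and quantifying this $2n$-dimensional family against $\int_X\Omega^{n+1} = k^n\int_\Sigma\omega^n$ -- equivalently computing a Gromov--Witten invariant that must be a nonnegative integer, or, in the singularity picture, a discrepancy/degree inequality governing smoothability of the cone -- should yield $k\le\int_\Sigma\omega^n$, contradicting the hypothesis $k>\int_\Sigma\omega^n$. Isolating the sharp inequality, and pinning down exactly where the value $\int_\Sigma\omega^n$ and the dimension hypothesis $2n\ge 4$ enter, is the delicate heart of the argument.

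Finally, the theorem asserts that \emph{no} contact structure on $M_k$ is Stein fillable, so the assumption $\eta=\xi$ must be removed. Since the cap $C$ is concave only for $\xi$, gluing to a filling of a different $\eta$ is not immediate; I would handle this homotopy-theoretically, using that a Stein filling $W$ has the homotopy type of a CW complex of dimension $\le n+1$ to constrain the almost contact (stable almost complex) structure induced on $M_k=\partial W$, thereby reducing any Stein fillable $\eta$ to the class of $\xi$ and showing the obstruction of the previous paragraph is shared by all candidate structures. Verifying that every potentially Stein fillable almost contact structure on $M_k$ carries the same obstruction is the second point requiring care.
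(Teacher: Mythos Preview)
Your proposal has a genuine gap at the step you yourself flag as ``the main obstacle'': you never derive the inequality $k \le \int_\Sigma \omega^n$, only express hope that some Gromov--Witten count or discrepancy argument will produce it. The identity $\int_X\Omega^{n+1}=k^n\int_\Sigma\omega^n$ is correct, but nothing in your outline explains what nonnegative integer invariant this quantity is supposed to equal or bound, nor why uniruledness by the fiber class would force the sharp bound with the specific value $\int_\Sigma\omega^n$. As written, the symplectic capping part of the argument is scaffolding around a missing core. The second gap is the reduction of an arbitrary Stein fillable $\eta$ to the Boothby--Wang $\xi$: you cannot glue the concave cap $C$ to a filling of a different contact structure, and there is no general principle that forces every Stein fillable almost contact class on $M_k$ to agree with that of $\xi$.

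The paper's route is entirely different and far more elementary: it never caps off, never uses holomorphic curves, and never needs $\eta=\xi$. It shows the purely topological statement that $M_k$ bounds \emph{no} compact manifold with the homotopy type of a CW complex of dimension $\le n+1$ (which any Stein filling would be, by Milnor). The mechanism is cohomological: set $a=\pi_k^*[\omega]\in H^2(M_k;\Z)$. The Gysin sequence shows $ka=0$, so $a$ is torsion, and that $a^n=\pi_k^*[\omega^n]\neq 0$ precisely when $[\omega^n]\notin\im(k[\omega]\smile)$, which holds once $k>\int_\Sigma\omega^n$ since that image lies in $k\Z\subset H^{2n}(\Sigma;\Z)\cong\Z$. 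Now if $W$ were such a filling, the long exact sequence of $(W,M_k)$ together with Poincar\'e--Lefschetz duality and the vanishing (or freeness, when $n=2$) of $H_{2n-1}(W;\Z)$ lets you lift $a$ to $\tilde a\in H^2(W;\Z)$; but then $\tilde a^n\neq 0$ in $H^{2n}(W;\Z)=0$ since $2n>n+1$. This is where both the value $\int_\Sigma\omega^n$ and the hypothesis $2n\ge 4$ enter, and it automatically covers every contact structure on $M_k$. Ironically, the one hard fact you invoke at the very end---the $(n{+}1)$-dimensional homotopy type of a Stein filling---is the \emph{only} analytic input the paper needs.
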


The proof of this theorem is inspired by a topological argument in \cite{PP}. 
What we really show is that the Boothby--Wang bundle $M_k$ with $k > \int_\Sigma \omega^{n}$ does not bound a compact manifold with the homotopy type of a CW complex of dimension $ \leq n+1$ (see Proposition~\ref{prop}). 
Together with a result on the homotopy types of Stein domains, this completes the proof of the theorem in Section \ref{section: proof}. 

Here are some remarks on the above theorem. 
Suppose that $\Sigma$ is a closed Riemann surface with an integral symplectic form $\omega$. 
The Boothby--Wang bundle $M_k$ over $(\Sigma, k\omega)$ carries a Stein fillable contact structure if $\chi(\Sigma)<k\int_{\Sigma}\omega$, where $\chi(\Sigma)$ denotes the Euler characteristic of $\Sigma$ (see \cite{GS} for example). 
Thus, Theorem~\ref{thm: main} does not hold true for surfaces in general since $\chi(\Sigma)$ is at most $2$. 
We also remark that the condition on $k$ in the above theorem cannot be dropped (see Remark~\ref{rem: k}). 
Additionally, readers who are familiar with approximately holomorphic techniques might wonder at first glance if the theorem contradicts a result of Giroux \cite{Giroux_Remarks}. 
Remark~\ref{rmk: verification} verifies the consistency of the theorem with his result. 

A similar result to Theorem \ref{thm: main} holds for Boothby--Wang orbibundles (see Theorem \ref{thm: orbibundle}). 
Even though the main theorem is involved in the orbibundle case, for simplicity we primarily discuss the usual bundle case, namely the case where the base space of a circle bundle is a manifold. 

In Section~\ref{section: singularity}, we present an application of Theorem~\ref{thm: main} to singularity theory. 
As a result, we establish an obstruction to the smoothability of certain isolated singularities.

It is worth mentioning results on another fillability.
As already observed, every Boothby--Wang contact manifold is strongly fillable.
The real projective space $\mathbb{RP}^{2n+1}$ of dimension $2n+1$ can be regarded as the Boothby--Wang bundle over $(\CP^{n}, 2\omega_{\FS})$, where $\omega_{\FS}$ denotes the Fubini--Study form on $\CP^{n}$. 
In view of this, we equip $\mathbb{RP}^{2n+1}$ with a contact structure $\xi_n$ defined by a connection $1$-form on the circle bundle; 
this in fact can be seen as the reduction of the standard contact structure on $S^{2n+1}$ by the $\Z/2\Z$-action. 
It had been conjectured in \cite{CourteAIM} that $(\mathbb{RP}^{2n+1}, \xi_n)$ ($n \geq 2$) is not exactly fillable.
Zhou \cite{Zhou_RP, Zhou2022fillings} recently showed that some Boothby--Wang contact manifolds of dimension $\geq 5$, which appear as links of quotient singularities, including real projective spaces, are not exactly fillable. 
This brings us the following question. 

\begin{question}
Let $(\Sigma, \omega)$ be a closed integral symplectic manifold of dimension $2n \geq 4$. 
Does the Boothby--Wang bundle over the symplectic manifold $(\Sigma, k\omega)$ carry an exactly fillable contact structure if $k > \int_{\Sigma} \omega^n$? 
\end{question}

%%%%%%%%%%%%%%%%%%%%%%%%%%%%%%%%%
%%%%%%%%%%%%%%%%%%%%%%%%%%%%%%%%%

\section{Non-Stein fillability of Boothby--Wang bundles}

%%%%%%%%%%%%%%%%%%%%%%%%%%%%%%%%%
%%%%%%%%%%%%%%%%%%%%%%%%%%%%%%%%%

\subsection{Stein fillable contact manifolds}\label{section: Stein}

We begin by recalling necessary definitions related to Stein fillability. 
The reader is referred to \cite{CE_book} for more details. 

A \textit{Stein domain} is a compact complex manifold $(W,J)$ with boundary that admits a strictly plurisubharmonic function $f \colon W \rightarrow \R$ with $\del W=f^{-1}(\max f)$. 

\begin{definition}
A (cooriented) contact manifold $(M, \xi)$ is said to be \textit{Stein fillable} if there exists a Stein domain $(W,J)$, called a \textit{Stein filling} of $(M,\xi)$, such that $\del W=M$ as oriented manifold and $T\del W \cap JT\del W=\xi$. 
\end{definition}

%%%%%%%%%%%%%%%%%%%%%%%%%%%%%%%%%
%%%%%%%%%%%%%%%%%%%%%%%%%%%%%%%%%

\subsection{Proof of the main theorem}\label{section: proof}

Now we shall prove Theorem \ref{thm: main}, which immediately follows from the next proposition. 

\begin{proposition}\label{prop}
Let $(\Sigma, \omega)$ be a closed integral symplectic manifold of dimension $2n \geq 4$. 
Then, for every integer $k> \int_\Sigma \omega^{n}$, the Boothby--Wang bundle $M_k$ over the integral symplectic manifold $(\Sigma, k\omega)$ does not bound a compact manifold with the homotopy type of a CW complex of dimension $ \leq n+1$.
\end{proposition}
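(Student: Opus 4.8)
The plan is to detect a torsion cohomology class on $M_k$ whose $n$-th power is nonzero, and then to show that a filling of small homotopy dimension cannot carry such a class. The class comes from the Gysin sequence of the circle bundle $p\colon M_k\to\Sigma$, whose Euler class is $-k[\omega]$. Set $N=\int_\Sigma\omega^{n}$ and let $u$ generate $H^{2n}(\Sigma;\Z)\cong\Z$, so that $[\omega]^n=Nu$. In degree $2$ the Gysin sequence gives $\Ker\bigl(p^*\colon H^2(\Sigma;\Z)\to H^2(M_k;\Z)\bigr)=\langle k[\omega]\rangle$, and since $[\omega]$ is non-torsion the class $\alpha:=p^*[\omega]$ has order exactly $k$. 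In the top degree, $\Ker\bigl(p^*\colon H^{2n}(\Sigma;\Z)\to H^{2n}(M_k;\Z)\bigr)$ equals the image of cup product with $-k[\omega]$, the map $H^{2n-2}(\Sigma;\Z)\to H^{2n}(\Sigma;\Z)$, which is $ks\Z\,u$, where $s\mid N$ generates the image of cup product with $[\omega]$. Because $0<N<k$ one has $ks\nmid N$, so $\alpha^{n}=p^*([\omega]^{n})=N\,p^*u\neq0$ in $H^{2n}(M_k;\Z)$. This is the only point at which the hypothesis $k>\int_\Sigma\omega^{n}$, together with the integrality of $[\omega]$, is used.

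Now suppose for contradiction that $M_k=\partial W$ with $W$ a compact $(2n+2)$-manifold homotopy equivalent to a CW complex of dimension $\le n+1$, and let $\iota\colon M_k\hookrightarrow W$. The obstruction to extending $\alpha$ over $W$ is its image under the connecting map $\delta\colon H^2(M_k;\Z)\to H^3(W,M_k;\Z)$, and by Poincaré–Lefschetz duality $H^3(W,M_k;\Z)\cong H_{2n-1}(W;\Z)$. Since $n\ge2$ we have $2n-1\ge n+1$, so this group is either zero (when $n\ge3$) or, in the borderline case $n=2$, the top homology $H_{n+1}(W;\Z)$ of a complex of dimension $\le n+1$, which is free abelian. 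In either case $H_{2n-1}(W;\Z)$ is torsion-free. As $\alpha$ is torsion, so is $\delta\alpha$; a torsion element of a torsion-free group vanishes, hence $\delta\alpha=0$ and $\alpha=\iota^*\tilde\alpha$ for some $\tilde\alpha\in H^2(W;\Z)$.

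Finally, $\alpha^{n}=\iota^*(\tilde\alpha^{\,n})$, and $\tilde\alpha^{\,n}\in H^{2n}(W;\Z)=0$ because $2n>n+1$ for $n\ge2$ and $W$ has the cohomology of an $(n+1)$-complex. Thus $\alpha^{n}=0$, contradicting the first paragraph and proving the proposition. The step I expect to require the most care is the order computation yielding $\alpha^{n}\neq0$ exactly when $k>\int_\Sigma\omega^{n}$, since the image of cup product with $[\omega]$ in the top degree need not be all of $H^{2n}(\Sigma;\Z)$; the hypothesis $N<k$ nevertheless forces nonvanishing. The other delicate point is the borderline degree $n=2$: there the extension obstruction lands in the top nonvanishing homology of $W$ rather than in a vanishing group, and it is precisely the freeness of $H_{n+1}(W;\Z)$ that lets the torsion obstruction die and closes the argument uniformly in $n$.
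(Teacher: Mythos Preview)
Your proof is correct and follows essentially the same approach as the paper: the paper isolates the obstruction argument as a separate lemma (a torsion class $a\in H^2(M;\Z)$ with $a^n\neq0$ forbids a filling of CW dimension $\le n+1$), and then verifies via the Gysin sequence that $\pi_k^*[\omega]$ has these properties, exactly as you do. Your treatment is slightly sharper in one place---you identify the image of $k[\omega]\smile$ in $H^{2n}(\Sigma;\Z)$ as $ks\Z$ rather than merely a subgroup of $k\Z$---but the logic and the handling of the borderline case $n=2$ via freeness of $H_{n+1}(W;\Z)$ are identical.
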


\begin{proof}[Proof of Theorem \ref{thm: main}] 
Suppose that the Boothby--Wang bundle $M_k$ over the symplectic manifold $(\Sigma, k\omega)$ of dimension $2n \geq 4$ carries a Stein fillable contact structure.
Then, it bounds a Stein domain, which has the homotopy type of a CW complex of dimension $ \leq n+1$ by \cite[Theorem 7.2]{Milnor}.
This however gives a contradiction to Proposition \ref{prop} when $k> \int_\Sigma \omega^{n}$. 
\end{proof}

To show Proposition \ref{prop}, we prove one lemma. 

\begin{lemma}\label{lem: obstruction}
Let $M$ be a closed manifold of dimension $2n+1 \geq 5$. 
Suppose that there exists a second cohomology class $a \in H^2(M;\Z)$ such that $a^{n} \neq 0 \in H^{2n}(M;\Z)$, and moreover, if $\dim M=5$, the class $a$ is a torsion element of $H^2(M;\Z)$. 
Then, $M$ does not bound a compact manifold with the homotopy type of a CW complex of dimension $ \leq n+1$. 
\end{lemma}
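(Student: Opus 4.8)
The plan is to argue by contradiction. Suppose $M=\partial W$ for some compact manifold $W$, necessarily of dimension $2n+2$, with the homotopy type of a CW complex of dimension $\leq n+1$. The homotopical dimension bound gives $H^{k}(W;G)=0$ and $H_{k}(W;G)=0$ for all $k\geq n+2$ and any coefficients $G$; combined with Lefschetz duality $H^{k}(W,M;G)\cong H_{2n+2-k}(W;\widetilde G)$ (twisted by the orientation system if $W$ is non-orientable, which is irrelevant for vanishing), this yields $H^{k}(W,M;G)=0$ for all $k\leq n$ and any $G$. The whole strategy is then to produce a class $b\in H^{2}(W;\Z)$ restricting to $a$ under $i^{*}\colon H^{2}(W;\Z)\to H^{2}(M;\Z)$, where $i\colon M\hookrightarrow W$ is the inclusion. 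Once such a $b$ exists, multiplicativity of $i^{*}$ gives $a^{n}=i^{*}(b^{n})$, and since $2n\geq n+2$ we have $b^{n}\in H^{2n}(W;\Z)=0$, whence $a^{n}=0$, contradicting $a^{n}\neq 0$.

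It remains to construct the lift $b$, and this is where the two cases in the statement diverge. Examining the long exact sequence of the pair $(W,M)$ in degree two, $H^{2}(W,M;\Z)\to H^{2}(W;\Z)\xrightarrow{i^{*}}H^{2}(M;\Z)\xrightarrow{\delta}H^{3}(W,M;\Z)$, the vanishing $H^{2}(W,M;\Z)=0$ (valid since $n\geq 2$) already makes $i^{*}$ injective. If $\dim M\geq 7$, that is $n\geq 3$, then $3\leq n$ forces $H^{3}(W,M;\Z)=0$ as well, so $i^{*}$ is an isomorphism and $a$ lifts directly to the desired $b$. This is the easy range, where the torsion hypothesis is not needed.

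The essential difficulty is the case $\dim M=5$, i.e. $n=2$, where $H^{3}(W,M;\Z)\cong H_{3}(W;\Z)$ may be nonzero and $i^{*}\colon H^{2}(W;\Z)\to H^{2}(M;\Z)$ need not be surjective; this is exactly where I would use that $a$ is torsion. The idea is to descend to $\Q/\Z$-coefficients via the Bockstein associated with $0\to\Z\to\Q\to\Q/\Z\to 0$. The image of the Bockstein $\beta_{M}\colon H^{1}(M;\Q/\Z)\to H^{2}(M;\Z)$ is precisely the torsion subgroup of $H^{2}(M;\Z)$, so $a=\beta_{M}(\hat a)$ for some $\hat a\in H^{1}(M;\Q/\Z)$. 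The point is that in degree one the restriction does remain surjective: the relevant obstruction now lives in $H^{2}(W,M;\Q/\Z)$, which vanishes because $2\leq n=2$. Hence $\hat a$ lifts to some $\hat b\in H^{1}(W;\Q/\Z)$, and setting $b:=\beta_{W}(\hat b)$ and invoking naturality of the Bockstein, $i^{*}\beta_{W}=\beta_{M}i^{*}$, gives $i^{*}b=\beta_{M}(\hat a)=a$, as required. Feeding this $b$ into the multiplicativity argument above completes the proof.

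The main obstacle is thus the borderline dimension five: in all higher dimensions the restriction on $H^{2}$ is an isomorphism and the argument is immediate, whereas for $n=2$ surjectivity genuinely fails and one must route the lift through $\Q/\Z$-coefficients, which works only because $a$ is torsion, matching exactly the extra hypothesis imposed when $\dim M=5$. I would also note that orientability of $W$ is never actually needed, since every use of duality serves only to establish a vanishing, which is insensitive to twisting by the orientation local system.
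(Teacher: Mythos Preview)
Your proof is correct and follows the same overall strategy as the paper: assume $M=\partial W$, lift $a$ to $\tilde a\in H^{2}(W;\Z)$, and derive $a^{n}=i^{*}(\tilde a^{n})=0$ from $H^{2n}(W;\Z)=0$. The only substantive difference is how you handle the lift in the borderline case $n=2$. The paper observes directly that $H^{3}(W,M;\Z)\cong H_{3}(W;\Z)$ is \emph{free} (being the top homology of a CW complex of dimension $\leq 3$), so the image of the torsion class $a$ under the connecting map $\varphi$ is a torsion element of a free group, hence zero, and $a$ lifts immediately. You instead route the lift through $\Q/\Z$-coefficients via the Bockstein, pushing the obstruction down to $H^{2}(W,M;\Q/\Z)=0$. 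Both are valid; the paper's freeness observation is a bit more direct and avoids changing coefficient systems, while your Bockstein argument has the virtue of making explicit why the torsion hypothesis is exactly what is needed. Your remark that orientability of $W$ is irrelevant (since duality is used only to establish vanishing or, in the paper's version, freeness, both of which survive twisting) is a point the paper passes over in silence.
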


\begin{proof}
Suppose, on the contrary, that there is a compact manifold $W$ such that $\del W=M$ and $W$ has the homotopy type of a CW complex of dimension $ \leq n+1$. 
Then, for $n \geq 3$, $H_{2n-1}(W;\Z)=0$, and for $n=2$, $H_3(W;\Z)$ is free. 
In view of the Poincar\'e duality, $H^{3}(W, M;\Z)$ also satisfies the same property as $H_{2n-1}(W;\Z)$. 
Hence, combining this with the exact sequence 
$$
	\cdots \rightarrow H^2(W; \Z) \xrightarrow{i^*} H^2(M; \Z) \xrightarrow{\varphi} H^3(W,M;\Z)  \rightarrow \cdots 
$$
shows the existence of a lift $\tilde{a} \in H^2(W;\Z)$ of the element $a \in H^2(M; \Z)$ with respect to the map $i^*$. 
Here $i$ denotes the inclusion $M=\del W \hookrightarrow W$. 
Consider the following commutative diagram:
$$
   \xymatrix@C=52pt{
    H^2(W; \Z)^{\otimes n} \ar[d]_{} \ar[r]^{(i^*)^{\otimes n}}  & H^2(M; \Z)^{\otimes n} \ar[d] \\
    H^{2n}(W; \Z) \ar[r]_{i^*} & H^{2n}(M;\Z),
   }
$$
where the vertical maps are the cup product. 
This diagram shows that 
$$
	i^*(\tilde{a}^{n})=(i^*\tilde{a})^{n}= a^{n} \neq 0 \in H^{2n}(M;\Z), 
$$
and hence $\tilde{a}^{n} \neq 0 \in H^{2n}(W;\Z)$. 
However, as $2n > n+1=\dim W/2$, we have $H^{2n}(W;\Z)=0$ for any $n\geq 2$, leading to a contradiction. 
\end{proof}

\begin{remark}
For the case $\dim M=2n+1 \geq 7$, i.e., $n \geq 3$, the above lemma can alternatively be obtained from \cite[Theorem 3.1]{PP}.
\end{remark}

\begin{proof}[Proof of Proposition \ref{prop}] 
Let $\pi_k \colon M_k \rightarrow \Sigma$ denote the bundle projection of the Boothby--Wang bundle over $(\Sigma, k\omega)$. 
It suffices to show that $\pi_k^*[\omega]$ meets the assumption on a second cohomology class $a \in H^2(M_k;\Z)$ in Lemma \ref{lem: obstruction} for every integer $k> \int_\Sigma \omega^n$.

We first check that $(\pi_k^*[\omega])^{n} \neq 0 \in H^{2n}(M_k;\Z)$. 
Consider the Gysin exact sequence 
\begin{align}\label{gysin}
	\cdots \rightarrow H^{2n-2}(\Sigma; \Z) \xrightarrow{k[\omega] \smile} H^{2n}(\Sigma; \Z) \xrightarrow{\pi_k^*} H^{2n}(M_k;\Z) \rightarrow \cdots
\end{align}
Since $\omega$ is a symplectic form on the closed manifold $\Sigma$ of dimension $2n$, we have $[\omega^n] \neq 0 \in H^{2n}(\Sigma; \Z)$. 
Identifying $H^{2n}(\Sigma; \Z)$ with $\Z$ via $\alpha \mapsto \alpha([\Sigma])$, 
the image $\im(k[\omega] \smile)$ lies in the subgroup $k\Z \subset H^{2n}(\Sigma; \Z) \cong \Z$. 
This proves that for $k>\omega^{n}([\Sigma])=  \int_\Sigma \omega^{n}$, the cohomology class $[\omega^n]$ is not contained in $\im(k[\omega] \smile)$. 
Therefore, in view of the exactness of the sequence (\ref{gysin}), we have 
$$
	(\pi_k^*[\omega])^{n}=\pi_k^*([\omega^n]) \neq 0 \in H^{2n}(M_k; \Z).
$$ 

What is left is to check that $\pi_k^*[\omega]$ is a torsion for the case $\dim M_k=5$. 
(Note that the proof below actually shows that this is true even if $\dim M_k \geq 5$.)
Again by the Gysin exact sequence 
\begin{align}\label{gysin2}
	H^{0}(\Sigma; \Z) \xrightarrow{k[\omega] \smile} H^{2}(\Sigma; \Z) \xrightarrow{\pi_k^*} H^{2}(M_k;\Z) \rightarrow \cdots,
\end{align}
we see that $k(\pi_k^*[\omega])=\pi^*_k(k[\omega])= 0 \in H^2(M_k;\Z)$. 
This completes the proof. 
\end{proof}

The following example explains that the condition on $k$ in Theorem \ref{thm: main} and Proposition \ref{prop} cannot be dropped. 

\begin{example}\label{rem: k}
Consider $S^2\times S^2$ equipped with the symplectic from $\omega_1+\omega_2$, where $\omega_i$ denotes the pull-back of an area form on $S^2$ with total area $1$ under the projection $S^2 \times S^2 \rightarrow S^2$ to the $i$th factor. 
We claim that for $k=1,2$ the Boothby--Wang contact manifold $(M_{k}, \xi_k)$ associated with $(S^2\times S^2, k(\omega_1+\omega_2))$ is Stein fillable. 
We will see here this claim for the case $k=2$ (see \cite[Section~2.2.2]{KO} for the case $k=1$). 
The symplectic manifold $(S^2\times S^2, \omega_1+\omega_2)$ can be symplectically embedded into $(\CP^3, \omega_{\FS})$ as the quadric $$Q=\{(z_0: x_1: z_2 :z_3) \in \CP^3 \mid z_0^2+z_1^2+z_2^2+z_3^2=0\}.$$ 
It is straightforward to check that with the standard complex structure $J$ on $\CP^3$, the tuple $(\CP^3, \omega_{\FS}, J; Q)$ is a smoothly polarized K\"ahler manifold of degree $2$ in the sense of \cite[Definitions 2.3.A]{Biran} (see also \cite[Section 3.1.2]{Biran}).
This shows that for some tubular neighborhood $\nu_Q$ of $Q$ in $\CP^3$, the boundary of the complement $\CP^3 \setminus \nu_Q$ carries a contact structure, with which the boundary is contactomorphic to $(M_2,\xi_2)$. 
Moreover, the complement $(\CP^3 \setminus \nu_{Q}, J|_{\CP^3 \setminus \nu_{Q}})$ serves as a Stein filling of this contact manifold. 
This example implies that even if the cohomology class of a symplectic form is non-primitive, the associated Boothby--Wang bundle may have a Stein fillable contact structure. 

Notice that the condition $k> \int_{\Sigma} \omega^{n}$ in Theorem \ref{thm: main} is sharp for the symplectic manifold $(S^2 \times S^2, \omega_1+\omega_2)$ since 
$$
	\int_{S^2 \times S^2}(\omega_1+\omega_2)^2=2, 
$$ 
and the Boothby--Wang contact manifold $(M_k, \xi_k)$, with $k=1,2$, is Stein fillable as observed above. 
\end{example}

\begin{remark}\label{rmk: verification}
Here we shall verify that Theorem~\ref{thm: main} is consistent with a theorem of Giroux, who enhanced the result of Donaldson \cite[Theorem~1]{Don} to \cite[Theorem~1]{Giroux_Remarks} as follows: given a closed integral symplectic manifold $(X, \Omega)$ of dimension $2n+2$, for any sufficiently large integer $k>0$, the Poincar\'e dual $k[\Omega] \in H^2{(X; \Z)}$ can be represented by a symplectic submanifold $\Sigma \subset X$, and the complement $X \setminus \Sigma$ admits a Stein structure. 
By \cite[Proposition~5]{Giroux_Remarks}, the complement $X \setminus \nu_\Sigma$ of a tubular neighborhood $\nu_\Sigma$ of $\Sigma$ has contact boundary, which is contactomorphic to the Boothby--Wang contact manifold $(M_k,\xi_k)$ associated with $(\Sigma, k\Omega|_{\Sigma})$. 
Moreover, similarly to Example \ref{rem: k}, this complement serves as a Stein filling of $(M_k,\xi_k)$; hence, it is Stein fillable. 

The integer $k$ in \cite[Theorem~1]{Giroux_Remarks} does not satisfy the assumption of Theorem~\ref{thm: main}, even if it is extremely large. 
In fact, we have 
$$
	\int_{\Sigma} (\Omega|_{\Sigma})^{n}=[\Omega]^{n} \smile \PD[\Sigma]=k\int_{X}\Omega^{n+1} \geq k, 
$$
where the last inequality follows from the fact that $[\Omega]$ is a second cohomology class with \textit{integer} coefficients. 
Thus, the consistency of the theorem has been verified. 
\end{remark}

%%%%%%%%%%%%%%%%%%%%%%%%%%%%%%%%%
%%%%%%%%%%%%%%%%%%%%%%%%%%%%%%%%%

\subsection{Boothby--Wang orbibundles}\label{section: orbibundle}

Theorem~\ref{thm: main} generalizes to Boothbby--Wang orbibundles. 
We refer the reader to \cite{BG} and \cite{ALR} for orbifolds, orbibundles and orbifold cohomology; see also \cite{KL} for Boothby--Wang orbibundles. 
In what follows, an orbifold always means a smooth effective orbifold. 

\begin{theorem}\label{thm: orbibundle}
Let $(\Sigma, \omega)$ be a closed integral symplectic orbifold of dimension $2n \geq 4$, and also let $k$ be an integer such that $k > \int_\Sigma \omega^{n}$ and the Boothby--Wang orbibundle $M_k$ over the symplectic orbifold $(\Sigma, k\omega)$ is a manifold. 
Then, $M_k$ carries no Stein fillable contact structure. 
\end{theorem}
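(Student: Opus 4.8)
The plan is to run the same three-step template used for Theorem~\ref{thm: main}, the only new ingredient being a form of the Gysin argument valid for orbibundles. Since $M_k$ is assumed to be a closed manifold of dimension $2n+1\geq 5$, both Milnor's bound on the homotopy type of a Stein domain \cite[Theorem 7.2]{Milnor} and the obstruction Lemma~\ref{lem: obstruction} apply verbatim to $M_k$: it is irrelevant that the base is an orbifold, because the conclusion of Lemma~\ref{lem: obstruction} is a statement about the closed \emph{manifold} $M_k$ alone. Thus it suffices to exhibit a class $a\in H^2(M_k;\Z)$ with $a^n\neq 0\in H^{2n}(M_k;\Z)$, which is moreover torsion when $\dim M_k=5$. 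As in the manifold case I would take $a=\pi_k^*[\omega]$, where now $\pi_k^*$ denotes pullback from the integral orbifold cohomology $H^*_{\orb}(\Sigma;\Z)$.

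The first task is to produce an honest integral Gysin sequence for the orbibundle. Writing $\Sigma=[M_k/S^1]$ as the quotient orbifold of the locally free Reeb $S^1$-action, I would identify $H^*_{\orb}(\Sigma;\Z)$ with the Borel equivariant cohomology $H^*_{S^1}(M_k;\Z)=H^*(M_k\times_{S^1}ES^1;\Z)$ and then apply the Gysin sequence of the \emph{genuine} principal circle bundle $M_k\times ES^1\to M_k\times_{S^1}ES^1$. Since $M_k\times ES^1\simeq M_k$, this yields the exact sequence
\[
\cdots\to H^{i-2}_{\orb}(\Sigma;\Z)\xrightarrow{\,k[\omega]\smile\,}H^i_{\orb}(\Sigma;\Z)\xrightarrow{\,\pi_k^*\,}H^i(M_k;\Z)\to\cdots,
\]
with Euler class $-k[\omega]$, entirely with integer coefficients, so no orbifold Poincaré duality is needed to set it up. Exactness in degree $2$ gives $\pi_k^*(k[\omega])=0$, so $a=\pi_k^*[\omega]$ is $k$-torsion; this settles the extra hypothesis required when $\dim M_k=5$ (and in fact shows $a^n$ is always torsion, exactly as in Proposition~\ref{prop}).

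It then remains to prove $a^n=\pi_k^*([\omega]^n)\neq 0$, i.e. that $[\omega]^n\notin \im(k[\omega]\smile)$ in $H^{2n}_{\orb}(\Sigma;\Z)$. Since every element of that image is divisible by $k$, it is enough to show that $[\omega]^n$ is not divisible by $k$. This is exactly where the argument must depart from Proposition~\ref{prop}, and it is where I expect the genuine difficulty: the orbifold integration pairing $\int_\Sigma\colon H^{2n}_{\orb}(\Sigma;\Z)\to\Q$ is only rational-valued, so $\int_\Sigma\omega^n$ need not be an integer, and the clean observation of Proposition~\ref{prop} that $\im(k[\omega]\smile)$ meets $H^{2n}(\Sigma;\Z)\cong\Z$ in $k\Z$ while $[\omega]^n\mapsto\int_\Sigma\omega^n$ is not a multiple of $k$ no longer makes literal sense. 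The plan is to extract more from the hypothesis that $M_k$ is a \emph{manifold}: smoothness of the total space forces the isotropy representation on the fibre of the defining orbifold line bundle to be faithful at every orbifold point, which forces the order $N$ of $\Sigma$ to be coprime to $k$. Consequently the denominators of integral orbifold classes are coprime to $k$, so $\im\bigl(\int_\Sigma\colon H^{2n}_{\orb}(\Sigma;\Z)\to\Q\bigr)=\tfrac1N\Z$ with $\gcd(N,k)=1$, and $[\omega]^n=k\gamma$ would force $N\int_\Sigma\omega^n\in k\Z$.

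The main obstacle is therefore to rule this out rigorously, i.e. to verify $k\nmid N\int_\Sigma\omega^n$, combining the bound $0<\int_\Sigma\omega^n<k$ with the precise congruence on $\int_\Sigma\omega^n$ dictated by smoothness of $M_k$; one convenient way to package this is to reduce modulo $k$, where coprimality of $N$ to $k$ makes $H^*_{\orb}(\Sigma;\Z/k)$ behave like the cohomology of a $\Z/k$-Poincaré space and lets one detect the reduction $\overline{[\omega]^n}$ by a $\Z/k$-fundamental class. Once this non-divisibility is established, $\pi_k^*[\omega]$ satisfies all hypotheses of Lemma~\ref{lem: obstruction}, and the proof concludes exactly as for Theorem~\ref{thm: main}; every remaining step is a routine transcription of Proposition~\ref{prop}.
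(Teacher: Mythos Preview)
Your overall strategy is the paper's: apply Lemma~\ref{lem: obstruction} to the manifold $M_k$ with $a=\pi_k^*[\omega]$, using the Gysin sequence of the orbibundle both to show that $a$ is $k$-torsion and that $a^n\neq 0$. The setup via Borel equivariant cohomology and the torsion step are correct and match the paper.

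The divergence, and the gap, is in the non-vanishing of $a^n$. You correctly flag that one cannot simply copy Proposition~\ref{prop}, because integration of integral orbifold classes need not land in~$\Z$. But your proposed repair is not carried through: you claim that smoothness of $M_k$ forces $\gcd(N,k)=1$ (plausible, but not argued), reduce to showing $k\nmid N\int_\Sigma\omega^n$, and then only gesture at a $\Z/k$-Poincar\'e-duality argument. Even granting the coprimality, the hypothesis $0<\int_\Sigma\omega^n<k$ does not by itself rule out $N\int_\Sigma\omega^n\in\{k,2k,\dots,(N-1)k\}$, and your sketch does not explain how the mod-$k$ reduction actually detects $\overline{[\omega]^n}\neq 0$; the ``main obstacle'' you name remains an obstacle.

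The paper takes a much shorter route at this point. It passes to real coefficients via
\[
\phi\colon H^{2n}_{\orb}(\Sigma;\Z)\longrightarrow H^{2n}_{\orb}(\Sigma;\R)\cong\R,
\]
the last isomorphism coming from orbifold de~Rham theory and integration (citing \cite{BG} and \cite{ALR}), and asserts directly that $\im(\phi)\subset\Z$. From this one gets $\im\bigl(\phi\circ(k[\omega]\smile)\bigr)\subset k\Z$, while $\phi([\omega]^n)=\int_\Sigma\omega^n$ is a nonzero number in $(0,k)$, hence not in $k\Z$; therefore $[\omega]^n\notin\im(k[\omega]\smile)$ and $\pi_k^*[\omega^n]\neq 0$. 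No coprimality argument, no $\Z/k$-duality, and no use of the smoothness hypothesis on $M_k$ enters this step. If you doubt the inclusion $\im(\phi)\subset\Z$ for orbifolds, that is a point worth scrutinising on its own; but as a proof proposal, yours still needs the non-divisibility to be established rather than outlined.
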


\begin{proof}
The proof is similar to Theorem~\ref{thm: main} and Proposition~\ref{prop}. 
It suffices to show that $\pi_k^*[\omega^n] \neq 0$ and that $\pi^*_k[\omega]$ is a torsion element of $H^2(M_k;\Z)$ for the given orbibundle $\pi_k \colon M_k \rightarrow \Sigma$.

The Gysin exact sequence also holds for the orbibundle $\pi_k \colon M_k \rightarrow \Sigma$ with orbifold cohomology. 
Thus, replace the exact sequences (\ref{gysin}) and (\ref{gysin2}) by 
\begin{align}\label{gysin3}
	\cdots \rightarrow H_{\mathrm{orb}}^{2n-2}(\Sigma; \Z) \xrightarrow{k[\omega] \smile} H_{\mathrm{orb}}^{2n}(\Sigma; \Z) \xrightarrow{\pi_k^*} H_{\mathrm{orb}}^{2n}(M_k;\Z) \rightarrow \cdots
\end{align}
and 
\begin{align}\label{gysin4}
	H_{\mathrm{orb}}^{0}(\Sigma; \Z) \xrightarrow{k[\omega] \smile} H_{\mathrm{orb}}^{2}(\Sigma; \Z) \xrightarrow{\pi_k^*} H_{\mathrm{orb}}^{2}(M_k;\Z) \rightarrow \cdots, 
\end{align}
respectively. 
Note that $H_{\mathrm{orb}}^{*}(M_k; \Z) \cong H^{*}(M_k;\Z)$ since $M_k$ is a manifold. 

In view of (\ref{gysin4}), it is straightforward to check that $k(\pi_k^*[\omega])=0$. 
To prove $\pi^*_k[\omega^n] \neq 0$, consider the natural homomorphism 
$$\phi \colon H_{\orb}^{2n}(\Sigma; \Z) \rightarrow H_{\orb}^{2n}(\Sigma; \R).$$ 
Combining \cite[Corollary~4.3.8]{BG} with the de Rham theorem for orbifolds yields   
$$
	H_{\orb}^{2n}(\Sigma; \R)  \cong H^{2n}(\Sigma;\R) \cong H_{\mathrm{dR}}^{2n}(\Sigma). 
$$
By the Poincar\'e duality \cite[p.~35]{ALR}, the last de Rham cohomology group is isomorphic to $\R$ via $[\eta] \mapsto \int_{\Sigma} \eta$. 
Under these identifications, the image $\im(\phi)$ of $\phi$ lies in $\Z \subset \R$, and hence the image $\im( \phi \circ (k[\omega] \smile) )$ of the composition $\phi \circ (k[\omega] \smile)$ lies in $k\Z \subset \R$. 
Since $\omega$ is a symplectic form on the orbifold $\Sigma$, we have $\phi([\omega^n]) \neq 0 \in H_{\orb}^{2n}(\Sigma;\R)$. 
Thus, for $k > \int_\Sigma \omega^n$, we conclude that $\phi([\omega^n]) \not\in \im( \phi \circ (k[\omega] \smile) )$ and $[\omega^n] \not\in \im(k[\omega] \smile)$, which implies that $\pi_k^*[\omega^n] \neq 0$. 
\end{proof}

\begin{remark}
Given a closed integral symplectic orbifold $(\Sigma, \omega)$ of dimension $2n$, necessary and sufficient conditions under which the Boothby--Wang orbibundle over  $(\Sigma, \omega)$ is a manifold are known. 
For instance, according to \cite[Theorem~1.3]{KL}, one such condition is that the homomorphism 
$$[\omega] \smile \colon H^{i}_{\orb}(\Sigma; \Z) \rightarrow H^{i+2}_{\orb}(\Sigma; \Z)$$ 
is an isomorphism for every $i  \geq 2n+1$. 
\end{remark}

%%%%%%%%%%%%%%%%%%%%%%%%%%%%%%%%%
%%%%%%%%%%%%%%%%%%%%%%%%%%%%%%%%%

\section{Application to smoothability of singularities}\label{section: singularity}

In this short section, we give an application of the main theorem to singularity theory. 

We first briefly recall some basic material of singularity theory. (For more details see e.g. \cite{GLS}, \cite{Greuel} or \cite{PP_winter}.)  
A \textit{singularity} $(X, x)$ is a complex space germ, and it is said to be \textit{smoothable} if there exists a $1$-dimensional deformation $\phi \colon (\mathcal{X}, x) \rightarrow (\C,0)$ of $(X,x)$, called a \textit{smoothing} of $(X,x)$, such that for every $t \in \C \setminus \{0\}$, the fiber $\mathcal{X}_t=\phi^{-1}(t)$ is smooth. 
Now let $(X, x) \subset (\C^N, \bm{0})$ be an isolated singularity. 
Then there exists $\epsilon_0>0$ such that the intersection of $(X,x)$ with the sphere $S^{2N-1}_\epsilon \subset \C^N$ of radius $\epsilon \in (0, \epsilon_0]$ becomes a smooth manifold, and its diffeomorphism type is independent of the choice of $\epsilon$; this intersection is called the \textit{link} of the singularity $(X,x)$. 
It inherits a contact structure $\xi$ from the standard contact structure on $S^{2N-1}_\epsilon$.
Suppose that an isolated singularity $(X, x)$ is smoothable. 
Then, by \cite[Proposition 1.5]{GLS}, a smoothing $\phi \colon (\mathcal{X},x) \rightarrow (\C,0)$ factors as 
$$
	(\mathcal{X}, x) \xhookrightarrow{i} (\C^{N}, \bm{0}) \times (\C, 0) \xrightarrow{\mathrm{pr_2}} (\C,0),
$$
where $i$ is a closed embedding and $\mathrm{pr}_2$ the second projection. 
For any $t \in \C$ sufficiently close to $0$, the intersection 
$$
	\mathcal{X}_t \cap (S_{\epsilon}^{2N-1} \times \{t\}) 
$$
is regarded as a contact manifold similarly to the above, which is contactomorphic to $(X \cap S^{2N-1}_\epsilon, \xi)$ by the Gray stability theorem. 
Moreover, the Milnor fiber $\phi^{-1}(t) \cap D^{2N}_{\epsilon}$, with $t \neq 0$, serves as a Stein filling of this contact manifold. 
Thus, $(X \cap S^{2N-1}_\epsilon, \xi)$ is Stein fillable if $(X,x)$ is smoothable. 
Note that examples of non-smoothable singularities are known; see e.g. \cite[pp.~418--420]{Greuel} and the references therein. 

Singularities we are interested in are derived from negative line bundles over projective manifolds. 
Let $\Sigma$ be a projective manifold and $\pi \colon L \rightarrow \Sigma$ a negative holomorphic line bundle, that is, a holomorphic line bundle with $c_1(L)=-[\omega]$, where $\omega$ is a K\"ahler form on $\Sigma$. 
%\textcolor{magenta}{Hermitian metric...}
According to the result of Grauert \cite[Satz 5 on p.~350]{Grauert62}, contracting the zero-section of $L$ yields an affine variety $(X, \bm{0}) \subset (\C^N, \bm{0})$ with an isolated singular point at the origin. 
The question is whether this singularity $(X, \bm{0})$ is smoothable or not. 
In his paper \cite[Section~4]{PP}, Popescu-Pampu established an obstruction to smoothability. 
Here we present a different obstruction to smoothability. 

\begin{proposition}
Let  $\pi \colon L \rightarrow M$ be a negative holomorphic line bundle over a projective manifold $\Sigma$ of complex dimension $\geq 2$ and $(X^{(k)}, \bm{0})$ the isolated singularity obtained by contracting the zero-section of $\pi^{\otimes k} \colon L^{\otimes k} \rightarrow \Sigma$ for $k \in \N$. 
Then, for every integer $k> \langle (-c_1(L))^{\dim_\C \Sigma}, [\Sigma] \rangle$, %the contact manifold $N(\Sigma; L^{\otimes k})$ is not Stein fillable. 
the link of the singularity $(X^{(k)}, \bm{0})$ with the canonical contact structure $\xi$ is not Stein fillable. 
In particular, the singularity $(X^{(k)}, \bm{0})$ is not smoothable. 
\end{proposition}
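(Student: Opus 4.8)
The plan is to reduce this proposition to Theorem~\ref{thm: main} by identifying the link of the singularity $(X^{(k)}, \bm{0})$ with a Boothby--Wang contact manifold. First I would recall Grauert's construction: contracting the zero-section of a negative line bundle $\pi^{\otimes k} \colon L^{\otimes k} \to \Sigma$ produces the affine variety $(X^{(k)}, \bm{0})$, and the link of this singularity is diffeomorphic (indeed contactomorphic, for the canonical contact structure $\xi$) to the circle bundle obtained as the boundary of a disk bundle in $L^{\otimes k}$. The key point is that this boundary circle bundle is precisely the unit circle bundle of $L^{\otimes k}$, whose Euler class is $c_1(L^{\otimes k}) = k \, c_1(L) = -k[\omega]$, where $\omega$ is the K\"ahler form with $c_1(L) = -[\omega]$. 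Hence the link is the Boothby--Wang bundle $M_k$ over the integral symplectic manifold $(\Sigma, k\omega)$.

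Next I would check that the numerical hypothesis matches. Since $c_1(L) = -[\omega]$, we have $-c_1(L) = [\omega]$, so
$$
	\langle (-c_1(L))^{\dim_\C \Sigma}, [\Sigma] \rangle = \langle [\omega]^n, [\Sigma] \rangle = \int_\Sigma \omega^n,
$$
where $n = \dim_\C \Sigma \geq 2$. Thus the condition $k > \langle (-c_1(L))^{n}, [\Sigma] \rangle$ is exactly the condition $k > \int_\Sigma \omega^n$ appearing in Theorem~\ref{thm: main}, and the base has dimension $2n \geq 4$ as required. With this translation in place, Theorem~\ref{thm: main} applies directly to conclude that $M_k$, and therefore the link of $(X^{(k)}, \bm{0})$ with its canonical contact structure $\xi$, carries no Stein fillable contact structure.

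The final clause follows from the general principle recalled earlier in this section: if an isolated singularity is smoothable, then its link is Stein fillable, with a Milnor fiber serving as the Stein filling. Taking the contrapositive, since the link of $(X^{(k)}, \bm{0})$ is not Stein fillable, the singularity cannot be smoothable.

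The main obstacle I anticipate is justifying the contactomorphism between the link and the Boothby--Wang contact manifold $(M_k, \xi_k)$, rather than merely a diffeomorphism of the underlying manifolds. I would address this by invoking the standard identification of the contact structure induced on the boundary of a negative disk bundle (equipped with a Hermitian metric and the associated connection) with the Boothby--Wang contact structure defined by a connection $1$-form of curvature $2\pi k\omega$; the canonical contact structure on the link, inherited from the standard contact structure on the ambient sphere, agrees with this under Grauert's contraction. The remaining steps—the Euler class computation and the numerical comparison—are routine once this identification is granted.
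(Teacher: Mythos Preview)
Your approach is essentially the same as the paper's: identify the link of $(X^{(k)},\bm{0})$ with the Boothby--Wang bundle $M_k$ over $(\Sigma,k\omega)$, translate the numerical hypothesis into $k>\int_\Sigma\omega^n$, and invoke the main result (the paper cites Proposition~\ref{prop} directly rather than Theorem~\ref{thm: main}, but this is immaterial). One simplification you should note: since Theorem~\ref{thm: main} (equivalently Proposition~\ref{prop}) asserts that $M_k$ carries \emph{no} Stein fillable contact structure whatsoever, a mere diffeomorphism between the link and $M_k$ suffices---the paper indeed establishes only a diffeomorphism---so your anticipated ``main obstacle'' of upgrading to a contactomorphism does not actually arise.
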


\begin{proof}
By construction, the contraction $\psi \colon L^{\otimes k} \rightarrow X^{(k)} \subset \C^{N}$ is biholomorphic away from the zero-section, and the image of each fiber of $\pi^{\otimes k} \colon L^{\otimes k} \rightarrow \Sigma$ is a complex line in $\C^{N}$ passing through the origin. 
Therefore, the restriction of the standard inner product on $\C^{N}$ to $\im \psi$ defines a hermitian metric on $L^{\otimes k}$, and the link of the singularity $(X^{(k)}, \bm{0})$ is diffeomorphic to the principal circle bundle $M(L^{\otimes k}) \rightarrow \Sigma$ associated with $\pi^{\otimes k} \colon L^{\otimes k} \rightarrow \Sigma$. 
This circle bundle can be regarded as the Boothby--Wang bundle over $(\Sigma, k\omega)$, where $\omega$ is a K\"ahler form on $\Sigma$ satisfying $c_1(L)=-[\omega]$. 
Thus, by Proposition \ref{prop}, if $k> \int_{\Sigma}\omega^{\dim_\C \Sigma}=\langle (-c_1(L))^{\dim_\C \Sigma}, [\Sigma] \rangle$, the manifold $M(L^{\otimes k})$ does not bound a compact manifold with the homotopy type of a CW complex of dimension $ \leq \dim_{\C} \Sigma+1$. 
Hence, the link of the singularity $(X^{(k)}, \bm{0})$ is not Stein fillable in this case. 
The last assertion follows immediately from the relationship between smoothability and Stein fillability observed above. 
\end{proof}

\subsection*{Acknowledgements}
The author would like to thank Yakov Eliashberg for useful comments on his question \cite[p.~4]{CourteAIM} and Zhengyi Zhou for helpful comments on the first draft, especially pointing out that Theorem \ref{thm: main} generalizes to orbibundles.
This work was supported by JSPS KAKENHI Grant Numbers 20K22306, 22K13913.

%%%%%%%%%%%%%%%%%%%%%%%%%
%%%%%%%%%%%%%%%%%%%%%%%%%

\addcontentsline{toc}{chapter}{Bibliography}

\end{document}